\def\Dj{\hbox{D\kern-.73em\raise.30ex\hbox{-}
\raise-.30ex\hbox{}}}
\def\dj{\hbox{d\kern-.33em\raise.80ex\hbox{-}
\raise-.80ex\hbox{\kern-.40em}}}
\newtheorem{theorem}{Theorem}[section]
\newtheorem{lemma}[theorem]{Lemma}
\newtheorem{definition}[theorem]{Definition}
\newtheorem{example}[theorem]{Example}
\begin{document}

%

%
%

\title{Efficient Enumeration of Cliques in Graphs with Bounded Maximum Degree}
\author{Shi-Cai Gong\thanks{Corresponding author.}\thanks{ Supported by National Natural Science Foundation of
China(12271484).}, Jia-Jin Wang, Xin-Hao Zhu and Bo-Jun Yuan
\thanks{ Supported by National Natural Science Foundation of
China(12201559).}~\thanks{ E-mail addresses: scgong@zafu.edu.cn(S. Gong), wangjiajin0876@163.com(J. Wang), zhuxinhao0508@163.com(X. Zhu) and
ybjmath@163.com(B. Yuan).}
\\
\\{\small \it  School of Science, Zhejiang University of Science and Technology, }\\{\small \it
Hangzhou, 310023, P. R. China}
   }
\date{}
\maketitle

\baselineskip=0.20in

\noindent {\bf Abstract.}
In recent years, there has been a surge of interest in extremal problems concerning the enumeration
 of independent sets or cliques in graphs with specific constraints. For instance, the Kahn-Zhao
 theorem establishes an upper bound on the number of independent sets in a $d$-regular graph.
 Building on this, Cutler and Radcliffe extended the result by identifying the graph that maximizes
 the number of cliques among graphs with  bounded order and maximum degree. 
\vspace{2mm}

In this paper, we introduce an innovative approach for counting cliques
 in graphs with a bounded maximum degree. To demonstrate the effectiveness of the method, we provide a new proof  for  the above
  Cutler-Radcliffe theorem and the Kahn-Zhao
 theorem.\vspace{3mm} 

%

\noindent {\bf Keywords}:  Independent set; $i$-independent set; clique; $i$-clique; counting.

 \smallskip
\noindent {\bf AMS subject classification 2010}: 05C30

\baselineskip=0.25in

\section{Introduction}

There has been significant interest in extremal problems involving
counting independent sets or cliques in graphs with specific constraints.
The study of clique enumeration can be traced back to 1941 when Tur\'{a}n \cite{Tur}
first bounded the number of $2$-cliques in graphs with fixed number of vertices and bounded clique number.
As a natural generalization of Tur\'{a}n Theorem, Zykov \cite{z} first determined the maximum number of cliques of a fixed order in a graph with fixed number
of vertices and bounded clique number
 (see also \cite{er,ha,ro,sa}).
Since then, numerous researchers
have contributed to this field, developing a rich body of literature that explores various aspects of clique and independent set enumeration.
Another notable example is the Kahn-Zhao theorem, which was initially
proved for bipartite graphs by Kahn \cite{Kahn} and later extended to all graphs by Zhao \cite{Zhao}.\vspace{3mm}

\begin{theorem} {\em (Kahn-Zhao)} \label{main0}
  If $G$ is a $d$-regular graph with $n$ vertices, then
  \begin{align*}
    i(G)^{\frac{1}{n}}\leq i(K_{d,d})^{\frac{1}{2d}}=(2^{d+1}-1)^{\frac{1}{2d}},
  \end{align*}
  where $i(G)$ denotes the number of independent sets contained in the graph $G$.
\end{theorem}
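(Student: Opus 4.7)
The plan is to apply the paper's new clique-enumeration framework to bound $i(G)$. Since independent sets of $G$ are in bijection with cliques of the complement $\bar G$, the quantity $i(G)$ equals the total number of cliques of $\bar G$. However, $\bar G$ has maximum degree $n-1-d$, so it cannot be fed directly into a bounded-degree clique bound. Instead I would work with $G$ itself and use the standard splitting recursion $i(G) = i(G - v) + i(G - N[v])$, partitioning independent sets according to whether they contain $v$, combined with the local information that $|N[v]| = d+1$ for every $v$.

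I would then proceed in three steps. First, iterate the recursion above along a carefully chosen order of vertices so that the recursion tree mimics the decomposition that is tight on the extremal graph (a disjoint union of copies of $K_{d,d}$, for which $i(K_{d,d}) = 2^{d+1}-1$). Second, assign to each vertex $v$ a local weight determined by the induced subgraph $G[N[v]]$, and apply the paper's clique-enumeration technique to show that this local weight is maximized when $G[N[v]]$ matches the extremal local configuration. Third, combine the local estimates by a multiplicative or entropy-style aggregation to obtain an inequality of the form $i(G)^{2d} \le (2^{d+1}-1)^{n}$, which is equivalent to the desired bound $i(G)^{1/n} \le (2^{d+1}-1)^{1/(2d)}$. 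Equality should force each local bound to be tight, characterizing $G$ as a disjoint union of copies of $K_{d,d}$.

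The main obstacle will be calibrating the local inequality so that the aggregation over $n$ vertices (or $nd/2$ edges) yields exactly the exponent $2d$, rather than the naive exponent $d+1$ that a vertex-by-vertex count of $N[v]$ would suggest. This factor-of-two discrepancy is characteristic of bipartite-type extremal problems: it can be handled either by first applying Zhao's tensor trick $i(G)^2 \le i(G \times K_2)$ to reduce to the $d$-regular bipartite case (where an independent set decomposes as $I_A \sqcup I_B$ with $I_B \subseteq B \setminus N(I_A)$, giving $i(G) = \sum_{I_A \subseteq A} 2^{|B|-|N(I_A)|}$), or by setting up an edge-based decomposition that pairs up the contributions of the two endpoints. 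In either approach, the essential new input is the clique-count inequality of the paper, which should play the role formerly served by Shearer's entropy inequality in the classical proofs of Kahn and Zhao.
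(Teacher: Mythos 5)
Your proposal is an outline of the classical Kahn--Zhao strategy rather than a proof, and the decisive step is missing. After invoking Zhao's tensor trick $i(G)^2\le i(G\times K_2)$ to pass to the $d$-regular bipartite case (an ingredient imported wholesale from Zhao's paper, not derived from anything in this one), what remains is exactly Kahn's theorem for bipartite graphs, i.e.\ the bound $\sum_{I_A\subseteq A}2^{|B|-|N(I_A)|}\le (2^{d+1}-1)^{n/(2d)}$ --- and that is the hard part. You replace it with the hope that ``the clique-count inequality of the paper should play the role formerly served by Shearer's entropy inequality,'' but you never formulate the local inequality, never specify the weights in step two, and never exhibit the aggregation that produces the exponent $n/(2d)$; indeed you yourself flag this calibration as ``the main obstacle.'' As written, steps one through three are aspirations, not arguments.

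Moreover, the paper's machinery is structurally the wrong shape to fill that hole. The vertex clique weight of Definition~\ref{Def1} enters only through the additive identity $k(G)=1+\sum_{u}w_G(u)$ of Theorem~\ref{second}, and the sharp local bound $w_G(u)\le\frac{2^{r+1}-1}{r+1}$ of Lemma~\ref{K_r+1} is calibrated precisely so that summing over $n$ vertices reproduces quantities like $a(2^{r+1}-1)+2^b$, which are \emph{linear} in $n$; this is exactly right for the disjoint-clique extremal problem of Theorem~\ref{maina}, but it cannot by itself deliver the \emph{multiplicative} bound $(2^{d+1}-1)^{n/(2d)}$, which is exponential in $n$ and needs a geometric-mean (entropy- or H\"older-type) aggregation that the clique-weight framework simply does not contain. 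The edge lemmas (Lemmas~\ref{key3} and~\ref{key4}) are likewise local tools for the deletion--addition exchange in the proof of Theorem~\ref{maina}, not substitutes for Shearer. For comparison, the paper does not attempt a direct argument for Theorem~\ref{main0} at all: it treats it as subsumed by Theorem~\ref{maina} and devotes the entire proof to the latter. So your instinct that the regular/independent-set statement requires a genuinely multiplicative argument with the $2d$ exponent is sound, but your proposal stops exactly where that argument would have to begin, and nothing in the paper's toolkit closes the gap for you.
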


If $G$ is a graph with $t$ independent sets, then $\overline{G}$, the complement of $G$, is a graph with $t$ cliques. Any degree condition
on $G$ can be translated into a corresponding degree condition on $\overline{G}$. Therefore,  extremal enumeration problems for independent
sets run in parallel to those for cliques.
In 2014, Cutler and Radcliffe \cite{cr} extended the conditions for clique counting problems from regularity to maximum degree.
They resolved and strengthened the conjecture posed by Galvin \cite{Galvin} as follows:\vspace{2mm}

\begin{theorem}{\em (Cutler and Radcliffe)}\label{maina}
  For all $n$, $r\in \mathbb{N}$, write $n=a(r+1)+b$ with $0\leq b \leq r$. If $G$ is a graph with order $n$  and maximum degree at most $ r$, then
  \begin{align*}
    k(G)\leq k(aK_{r+1}\cup K_b),
  \end{align*}
  with equality if and only if $G=aK_{r+1}\cup K_b$, or $r=2$ and $G=(a-1)K_{3}\cup C_4$ or $(a-1)K_3\cup C_5$,
  where $k(G)$ denotes the number of cliques contained in the graph $G$.
\end{theorem}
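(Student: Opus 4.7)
The plan is to proceed by induction on the order $n$. For the base cases $n \le r+1$, any graph $G$ of order $n$ with maximum degree at most $r$ is a subgraph of $K_{r+1}$, so $k(G) \le 2^n = k(K_n)$, matching the target with $a=0$ and $b=n$. For the inductive step, the fundamental identity I will exploit is
\[ k(G) = k(G - v) + k(G[N(v)]), \]
which partitions cliques of $G$ into those avoiding $v$ (counted by $k(G-v)$) and those containing $v$ (in bijection with cliques of $G[N(v)]$ via $C \mapsto C \setminus \{v\}$). I then split the argument on whether $G$ has a component isomorphic to $K_{r+1}$.

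If some component $H$ of $G$ is isomorphic to $K_{r+1}$, then since cliques cannot span distinct components, $k(G) = k(H) + k(G - H) - 1$. Writing $n - (r+1) = (a-1)(r+1) + b$ and applying the inductive hypothesis to $G - H$ gives $k(G - H) \le k((a-1)K_{r+1} \cup K_b)$, and the desired bound follows immediately.

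The substantial case is when no component of $G$ is a $K_{r+1}$. The bound $k(G[N(v)]) \le 2^{d(v)} \le 2^r$ is then strict at every vertex $v$: equality would force $G[N(v)] \cong K_r$ with $d(v)=r$, making $\{v\}\cup N(v)$ an isolated $K_{r+1}$-component and contradicting the case hypothesis. However, substituting the strict bound $k(G[N(v)]) \le 2^r - 1$ into the recursion is still too lossy, since the extremal family only allows a per-vertex gain of $2^{b-1} \le 2^{r-1}$ when a vertex is appended. My strategy is therefore a local exchange (shifting) argument: exhibit a local modification of $G$ --- for instance, redistributing edges across a pair of non-adjacent vertices, or merging two non-clique components --- that preserves or strictly increases $k(G)$ and eventually produces a $K_{r+1}$-component. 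Iterating such moves reduces to the first case.

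The main obstacle will be designing this local exchange and proving that the clique count does not decrease under it. The equality analysis adds a further layer of difficulty: any shift that preserves $k(G)$ must be traced back to recover all extremizers, and in particular the $r=2$ anomalies $(a-1)K_3 \cup C_4$ and $(a-1)K_3 \cup C_5$ --- one verifies directly that $k(C_4) = 9 = k(K_3 \cup K_1)$ and $k(C_5) = 11 = k(K_3 \cup K_2)$. Showing that no analogous exceptional extremizers arise for $r \ge 3$ will require a careful exhaustion of the equality cases in the shift step, which is where I expect most of the technical work to be concentrated.
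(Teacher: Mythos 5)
Your skeleton --- peel off $K_{r+1}$ components by induction, and when none exists, find a clique-count-nondecreasing local modification that creates one --- is in spirit the same route the paper follows (the paper fixes a maximizer $G$, takes a vertex $u$ of maximum clique weight, and shows by an edge-deletion/edge-addition exchange that $G[N[u]]$ must be $K_{r+1}$). But your proposal stops exactly where the proof begins: you do not specify the exchange, do not check it respects the constraint $\Delta\le r$, and above all do not prove the counting inequality that cliques gained outweigh cliques lost. That inequality is the entire content of the theorem and is far from routine. In the paper it requires: (i) the vertex clique weight $w_G(u)=\sum_i\frac1i k_i(u;G)$ with $k(G)=1+\sum_u w_G(u)$, together with the numerical Lemma \ref{key11} (proved by separate casework for $r\ge 12$, $7\le r\le 11$, $4\le r\le 6$, $r=3$, with an exceptional triple $r=3,s=2,p=1$) to force a lower bound on the maximum weight $w_G(\Delta)$ in an extremal graph; (ii) the Kruskal--Katona theorem (Theorem \ref{lemma1}) to convert that weight bound into the structural fact $|E(G[N[u]])|\ge\binom{r+1}{2}-p$ with $p$ small enough that $r\ge 3p+1$; and (iii) the two edge-weight lemmas, Lemma \ref{key3} and Lemma \ref{key4}, bounding the cliques destroyed when the cross edges at $N[u]$ are deleted (by $2^{p+1}$, resp.\ $2^{p-t}+k(K_{p+t+1}-tK_2)-k(K_{p+t}-tK_2)$), which are then beaten by the gain $k(K_{r-2p+3})-k(K_{r-2p+3}-K_2)=2^{r-2p+1}$ (and its $t$-matching analogue) from completing $N[u]$ into a clique. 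Without (i)--(iii) or a substitute of comparable strength, the sentence ``exhibit a local modification that preserves or strictly increases $k(G)$ and eventually produces a $K_{r+1}$-component'' is a restatement of what must be proved, not a proof.

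Two further points you defer but cannot skip. First, an iterated exchange needs a termination argument; the paper sidesteps this by arguing once about a global maximizer (which exists since graphs of order $n$ with $\Delta\le r$ form a finite class) and deriving a contradiction, and your plan would be cleaner if recast the same way. Second, the whole equality analysis is left open: the $r\le 2$ exceptions ($k(C_4)=k(K_3\cup K_1)$, $k(C_5)=k(K_3\cup K_2)$) are handled in the paper by direct vertex-weight computations in separate cases, and for $r=3$ the borderline identity noted in the paper's Remark 2 shows the weight bound can be attained with equality by a non-extremal local structure, so ruling out spurious extremizers for $r\ge 3$ genuinely needs the strictness tracking you postpone. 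As it stands the proposal is a plausible outline whose central step and uniqueness claim are both missing.
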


Both the Kahn-Zhao theorem and the Cutler-Radcliffe theorem have been used as a foundation for further research in extremal graph theory.
 For example, it has been applied to study the supersaturation of subgraph counts,
 where researchers investigate how the number of certain subgraphs grows as the number of edges in a graph increases.
 Bounds on the number of cliques in a graph have recently been applied in the analysis of an
algorithm for finding small separators \cite{32} and in the enumeration of minor-closed
families \cite{28}.
Additionally, these two theorems have been further extended to incorporate other constraint conditions.
 For more in-depth research on subgraph counting problems, one can refer to \cite{AMM,ch,ckk,cnr,kra}
 and the references therein.\vspace{2mm}


In this paper, we introduce an innovative approach for counting cliques
 in graphs with a bounded maximum degree. To demonstrate the effectiveness of the method, we provide a new proof  for  the above
  Cutler-Radcliffe theorem and the Kahn-Zhao
 theorem.
  The rest of the paper is  organized as follows.
In Section $2$, we make some necessary preparations, introduce the concepts of the clique
 weight of a vertex and the clique weight of an edge, as well as some related lemmas.
 Then we  provide a new proof  for  Theorem \ref{maina}, as well as Theorem \ref{main0}, in Section $3$.\vspace{2mm}

\section{Preliminaries}
All graphs considered in this paper are   finite, simple and undirected.
For undefined graph terminology and notation, readers are referred to the monograph \cite{Bondy}.\vspace{2mm}

Let $G=(V(G),E(G))$ be a  graph with $v\in V(G)$. Denote
by $N_G(v)$ and $N_G[v]$ the open and closed neighborhoods of $v$ in $G$, respectively.
Moreover, we use  $d_H(v)$ to represent the degree of $v$ in a subgraph $H$, i.e., $d_H(v)=|N_H(v)|$.
 Denote by $\Delta(G)$ the maximum degree of the graph $G$.
For a subset $S$ of
$V(G)$, the subgraph of $G$ induced on $S$ is denoted by $G[S]$.
For simplicity, we write $G[V(G)\backslash S]$ as $G-S$. For a subgraph $H$ of $G$,
we use $G-H$ to denote the subgraph obtained from $G$ by removing only the edges in $H$
 while retaining the vertices in $H$. Additionally, we use $aH$ to represent $a$ copies of the graph $H$.
 We use $K_r-tP_2$ to denote the graph obtained from the complete graph $K_r$ by deleting a $t$-matching, where $t\le \lfloor\frac{r}{2}\rfloor$. \vspace{2mm}

Let $G_1=(V(G_1),E(G_1))$ and $G_2=(V(G_2),E(G_2))$
 be two graphs. The {\it union} of $G_1$ and $G_2$, denoted by $G_1\cup G_2$,
 is defined as the graph with vertex set $V(G_1)\cup V(G_2)$ and edge set $E(G_1)\cup E(G_2)$.
In particular, if $G_1$ and $G_2$ are disjoint (i.e., $V(G_1)\cap V(G_2)=\emptyset$),
the union is  referred to as the {\it disjoint} union of the two graphs.\vspace{2mm}

Set $[n]:=\{1,2,\ldots,n\}$. The {\it colexicographic order}, denoted by $<_C$, is defined on finite subsets of $\mathbb{N}$ as follows: for two subsets $A$ and $B$, we have $A <_C B$ if the maximum element in the symmetric difference $A \Delta B$ belongs to $B$.
When restricted to $2$-subsets of $[n]$, the colexicographic order induces a specific ordering on the edges of the complete graph $K_n$. The first few edges in this colex ordering are:
$$\{1, 2\}, \{1, 3\}, \{2, 3\}, \{1, 4\}, \{2, 4\}, \{3, 4\}, \{1, 5\}, \{2, 5\},\{3, 5\},\{4, 5\}, \ldots.$$
The {\it colex graph} $\mathcal{C}(n,m)$ is defined as the graph with vertex set $[n]$ and edge set consisting of the first $m$ edges in the colex order on $E(K_n)$.
For more research on colex graph, one can refer to \cite{aanw,kr}. It is worth noting that when $m = \binom{r}{2} + s$ with $0 \leq s < r$, the graph $\mathcal{C}(n,m)$ comprises a clique of order $r$ together with an additional vertex connected to $s$ vertices of this clique.\vspace{2mm}

Let \( G \) be a graph with \( u \in V(G) \). We denote by \( k_t(G) \)
 the number of cliques of size \( t \) in \( G \),
 and the total number of cliques is given by \( k(G) = \sum_{t \geq 0} k_t(G) \).
  The notation  \( k_t(u;G) \) is used to represent the number of cliques
   of size \( t \) in \( G \) that contain the vertex \( u \).
   For brevity, we write \( k_t(u) \) instead of \( k_t(u;G) \)
   when the graph \( G \) is clear from the context. Kruskal-Katona theorem implies the following:\vspace{2mm}

\begin{theorem} {\em \cite{kat,kru}.} \label{lemma1} For any positive integers $t, m$, and any graph $G$ with $n$ vertices and $m$ edges,
	we have that $$k_t(G) \le k_t(\mathcal{C}(n,m)).$$ Moreover, $\mathcal{C}(n,m)$
	is the unique graph satisfying the equality
	when $s \ge t-1$, where $m = \binom{r}{2}
	+ s$ with $0 \le s <r$.
\end{theorem}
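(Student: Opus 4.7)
The plan is to derive this statement as a consequence of the classical Kruskal--Katona shadow theorem, applied not to the family of $t$-cliques directly as sets of vertices but to their $2$-shadows. The key conceptual move is to interpret every $t$-clique in $G$ as a $t$-element subset of $V(G) = [n]$, so that the edges a clique uses are precisely its lower $2$-shadow.

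Concretely, let $\mathcal{F}$ denote the family of $t$-cliques of $G$, viewed as a family of $t$-subsets of $[n]$, and let $\partial_{2}\mathcal{F}$ denote its $2$-shadow (all $2$-subsets contained in some element of $\mathcal{F}$). I would carry out four steps. First, observe that $\partial_{2}\mathcal{F} \subseteq E(G)$, hence $|\partial_{2}\mathcal{F}| \le m$. Second, invoke the generalized Kruskal--Katona theorem with shadow parameter $j = 2$ to obtain
\[
|\partial_{2}\mathcal{F}| \;\ge\; |\partial_{2}\mathcal{C}_{t}(|\mathcal{F}|)|,
\]
where $\mathcal{C}_{t}(|\mathcal{F}|)$ denotes the initial segment of length $|\mathcal{F}|$ in the colex order on $t$-subsets of $[n]$. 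Third, note that $\partial_{2}\mathcal{C}_{t}(|\mathcal{F}|)$ is itself an initial segment of the colex order on $2$-subsets (this is the standard fact that the colex compression commutes with taking shadows); combined with $|\partial_{2}\mathcal{C}_{t}(|\mathcal{F}|)| \le m$, this forces $\partial_{2}\mathcal{C}_{t}(|\mathcal{F}|) \subseteq E(\mathcal{C}(n,m))$. Finally, every $t$-set in $\mathcal{C}_{t}(|\mathcal{F}|)$ has all of its pairs in $E(\mathcal{C}(n,m))$, hence is a $t$-clique of $\mathcal{C}(n,m)$, giving
\[
k_{t}(G) = |\mathcal{F}| \;\le\; k_{t}(\mathcal{C}(n,m)),
\]
as required.

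For the uniqueness claim in the regime $s \ge t-1$, I would work backwards through the chain of inequalities. When $m = \binom{r}{2}+s$ with $s \ge t-1$, the colex graph $\mathcal{C}(n,m)$ consists of a $K_{r}$ on $[r]$ together with a vertex $r+1$ joined to $[s]$, and its $t$-cliques form precisely the initial colex segment of length $\binom{r}{t} + \binom{s}{t-1}$ whose $2$-shadow is all of $E(\mathcal{C}(n,m))$. Equality $k_{t}(G) = k_{t}(\mathcal{C}(n,m))$ therefore forces equality in the Kruskal--Katona bound on the $2$-shadow, and the uniqueness portion of Kruskal--Katona (valid precisely because $s \ge t-1$ keeps us outside the degenerate cascade regime where non-colex extremal families exist) pins $\mathcal{F}$ down to the colex initial segment of $t$-sets, which in turn pins $E(G)$ down to $E(\mathcal{C}(n,m))$.

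The main obstacle, as I see it, is less in the logical skeleton of the argument than in citing the correct version of Kruskal--Katona: the standard formulation gives lower bounds on the $(r-1)$-shadow, whereas here one needs the iterated $2$-shadow bound, so the proposal implicitly relies on the routine but nontrivial fact that applying the $1$-step shadow inequality $t-2$ times preserves the colex extremality. If one did not wish to appeal to that iteration, an alternative would be to run a direct compression argument: replace $G$ by a graph obtained via $ij$-compressions on the edge set that never decrease $k_{t}(\cdot)$, show the process terminates in a colex-compressed edge set, and verify that the resulting graph must be $\mathcal{C}(n,m)$. I would prefer the Kruskal--Katona route for brevity, since the compression route would require several pages of bookkeeping that the statement here treats as a citation.
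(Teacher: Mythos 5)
Your derivation is correct and follows essentially the paper's own route: the paper gives no proof of this statement, quoting it as the standard corollary of the Kruskal--Katona theorem (citing Katona and Kruskal), and your argument --- view the $t$-cliques as $t$-sets, note their $2$-shadow lies in $E(G)$, apply the iterated shadow bound, and use that shadows of colex initial segments are colex initial segments contained in $E(\mathcal{C}(n,m))$ --- is exactly that standard derivation. The only caveat is that your uniqueness step invokes an ``equality portion'' of Kruskal--Katona that is not part of the classical theorem but of later refinements (M\"ors, F\"uredi--Griggs), so that part of your write-up is a citation rather than a proof --- which is still no less than the paper itself provides for the same claim.
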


Given that each $t$-clique is composed of $t$ vertices with equal contribution,
 we introduce the concept of {\it vertex clique weight}. This concept enables
  us to express the global parameter $k(G)$ as the sum of local vertex contributions,
  where each vertex contributes $1/t$ to each $t$-clique that contains it. \vspace{2mm}

 \begin{definition}\label{Def1}
  Let $G$ be an arbitrary graph with $u\in V(G)$. 
   The vertex clique weight is then defined as $$w_G(u)=\sum_{i}\frac{1}{i}k_i(u;G),$$
  where the  summation is taken over all $i$-cliques, $i\ge 1$, in $G$ that contain the vertex $u$.
\end{definition}


\begin{lemma}\label{K_r+1}
  Let $G$ be a graph with $u\in V(G)$. If $d_G(u)=r ~(r\ge 1)$, then $$ w_G(u)\le \frac{1}{r+1}(2^{r+1}-1),$$  with equality  if and only if $G[N(u)]=K_r$.
\end{lemma}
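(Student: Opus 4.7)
The plan is to reduce $w_G(u)$ to a sum over cliques in the induced subgraph $H:=G[N(u)]$, bound each term combinatorially, and then evaluate the extremal sum in closed form.

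First, I would observe that every $i$-clique of $G$ containing $u$ is obtained by adjoining $u$ to an $(i-1)$-clique of $H$, so $k_i(u;G)=k_{i-1}(H)$ for each $i\ge 1$. Substituting into Definition~\ref{Def1} and reindexing gives
\begin{equation*}
w_G(u)=\sum_{i\ge 1}\frac{1}{i}k_{i-1}(H)=\sum_{j=0}^{r}\frac{1}{j+1}k_j(H),
\end{equation*}
where I use that $H$ has $r$ vertices, so $k_j(H)=0$ for $j>r$.

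Next, I would use the trivial bound $k_j(H)\le \binom{r}{j}$, valid because every $j$-clique is a $j$-subset of $V(H)$, which has size $r$. Applied term by term this yields
\begin{equation*}
w_G(u)\le \sum_{j=0}^{r}\frac{1}{j+1}\binom{r}{j}=\frac{1}{r+1}\sum_{j=0}^{r}\binom{r+1}{j+1}=\frac{1}{r+1}\bigl(2^{r+1}-1\bigr),
\end{equation*}
using the identity $\frac{1}{j+1}\binom{r}{j}=\frac{1}{r+1}\binom{r+1}{j+1}$ and summing the top $r+1$ binomial coefficients of row $r+1$.

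For the equality case, the inequality is tight only when $k_j(H)=\binom{r}{j}$ for every $j\ge 2$ that contributes nontrivially; the first such constraint, $k_2(H)=\binom{r}{2}$, already forces every pair of vertices in $N(u)$ to be adjacent, i.e.\ $H=K_r$. Conversely, if $H=K_r$ then $k_j(H)=\binom{r}{j}$ for all $j$, so equality holds. I do not anticipate any real obstacle: the argument is a direct combinatorial identity combined with the obvious vertex-subset bound on clique counts, and the equality analysis is forced at the level of $2$-cliques.
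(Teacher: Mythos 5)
Your proposal is correct and follows essentially the same route as the paper: identifying $k_i(u;G)$ with clique counts in $G[N(u)]$, bounding them by $\binom{r}{i-1}$, and summing via the identity $\frac{1}{j+1}\binom{r}{j}=\frac{1}{r+1}\binom{r+1}{j+1}$, with the equality case forced already at the level of pairs in $N(u)$. No gaps; your treatment of the equality case is in fact slightly more explicit than the paper's.
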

\begin{proof} Given $d_G(u)=r$, we observe that  $k_t(u)=0 $ for $t>r+1$ and $k_t(u)\le \binom{r}{t-1}$ for $t\in [r+1]$.
 Equality holds if and only if  $G[N(u)]=K_r$. By Definition \ref{Def1},
$$w_G(u)=\sum_t\frac{1}{t}k_t(u)\le \sum_{t=1}^{r+1}\frac{1}{t}\binom{r}{t-1}=\frac{1}{r+1}(2^{r+1}-1),$$  where the equality holds if and only if $G[N(u)]=K_r$.
Consequently, the proof is complete.
\end{proof}

\begin{theorem}\label{second}
	Let $G$ be an arbitrary graph. Then
	$$k(G)=1+\sum_{u\in V(G)}w_G(u).$$
\end{theorem}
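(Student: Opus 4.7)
The plan is to prove the identity by a straightforward double-counting argument applied to the definition of $w_G(u)$, treating the ``$+1$'' on the right-hand side as the contribution of the empty clique (the unique $0$-clique in $G$).

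First I would unfold the right-hand side using Definition \ref{Def1}:
\begin{align*}
\sum_{u\in V(G)} w_G(u) \;=\; \sum_{u\in V(G)} \sum_{i\ge 1} \frac{1}{i}\, k_i(u;G).
\end{align*}
Then I would swap the order of summation, rewriting the right-hand side as $\sum_{i\ge 1} \frac{1}{i} \sum_{u\in V(G)} k_i(u;G)$. The key combinatorial observation is a standard incidence count: for each $i\ge 1$, summing $k_i(u;G)$ over all vertices $u$ counts each $i$-clique of $G$ exactly once per vertex it contains, so $\sum_{u\in V(G)} k_i(u;G) = i\cdot k_i(G)$. Substituting gives $\sum_{u\in V(G)} w_G(u) = \sum_{i\ge 1} k_i(G)$.

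Finally, I would note that by the convention implicit in $k(G) = \sum_{t\ge 0} k_t(G)$, we have $k_0(G)=1$ (counting the empty set as the unique clique of size $0$), so $\sum_{i\ge 1} k_i(G) = k(G) - 1$. Rearranging yields the desired identity
\begin{align*}
k(G) \;=\; 1 + \sum_{u\in V(G)} w_G(u).
\end{align*}

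There is no substantive obstacle here; the only subtlety worth flagging is the bookkeeping of the empty clique, which explains the additive constant $1$ and ensures the equality holds even when $G$ has no vertices. Everything else is a one-line swap of summation followed by the incidence identity $\sum_u k_i(u;G) = i\, k_i(G)$.
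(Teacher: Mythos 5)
Your proposal is correct and follows essentially the same route as the paper: the paper's proof simply invokes Definition \ref{Def1} together with the observation that the empty clique is the only clique containing no vertices, and your argument just writes out the underlying double-counting (the swap of summation and the identity $\sum_{u} k_i(u;G) = i\,k_i(G)$) explicitly. No issues.
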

\begin{proof}
	The result follows directly from Definition \ref{Def1} and the observation that the empty clique is  the only clique that contains no vertices.
\end{proof}	

Below, we show  the clique counting formula in Theorem 2.4 also satisfies the principle of inclusion-exclusion.

\begin{theorem}\label{first}
Let $ G$ be the union of graphs  $G_1$ and $ G_2 $ with intersection
 $ G_1 \cap G_2=H$.  If  $G$ contains no clique containing edges from both of $E(G_1)\backslash E(H)$ and $E(G_2)\backslash E(H)$ and $u\in V(H)$,
 then
$$w_G(u) = w_{G_1}(u) + w_{G_2}(u) - w_H(u).$$
\end{theorem}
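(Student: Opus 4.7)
The plan is to reduce the statement to a level-by-level inclusion-exclusion identity. Specifically, I will prove that for every $t \ge 1$,
$$k_t(u;G) = k_t(u;G_1) + k_t(u;G_2) - k_t(u;H),$$
after which dividing by $t$ and summing over $t$ recovers the claim via Definition \ref{Def1}.

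Fix $t \ge 1$, and let $\mathcal{K}_t^{(i)}$ denote the collection of $t$-cliques of $G_i$ containing $u$ for $i \in \{1,2\}$, and $\mathcal{K}_t^{(H)}$ the analogous family for $H$. Any clique lying in both $\mathcal{K}_t^{(1)}$ and $\mathcal{K}_t^{(2)}$ has all of its edges in $E(G_1)\cap E(G_2) = E(H)$ and all of its vertices in $V(G_1)\cap V(G_2) = V(H)$ (for $t\ge 2$ the vertex condition is forced by the edge condition, and for $t=1$ it follows from the hypothesis $u\in V(H)$), so it belongs to $\mathcal{K}_t^{(H)}$; the reverse inclusion is immediate. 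Hence $\mathcal{K}_t^{(1)}\cap\mathcal{K}_t^{(2)} = \mathcal{K}_t^{(H)}$, and ordinary inclusion-exclusion for finite sets gives
$$|\mathcal{K}_t^{(1)}\cup\mathcal{K}_t^{(2)}| \;=\; k_t(u;G_1) + k_t(u;G_2) - k_t(u;H).$$

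What remains---and where the hypothesis of the theorem is actually used---is to verify that every $t$-clique of $G$ containing $u$ already lies in $\mathcal{K}_t^{(1)}\cup\mathcal{K}_t^{(2)}$, so that the left-hand side above equals $k_t(u;G)$. This is the main (though mild) obstacle. For $t=1$ the singleton $\{u\}$ belongs to both sides. For $t\ge 2$, given such a clique $K$, each edge of $K$ lies in the disjoint decomposition $(E(G_1)\setminus E(H))\cup E(H)\cup(E(G_2)\setminus E(H))$, and the hypothesis forbids $K$ from containing edges from both $E(G_1)\setminus E(H)$ and $E(G_2)\setminus E(H)$. Therefore all edges of $K$ lie in $E(G_1)$ or all lie in $E(G_2)$, forcing $K\in\mathcal{K}_t^{(1)}$ or $K\in\mathcal{K}_t^{(2)}$. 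Multiplying the level-$t$ identity by $1/t$ and summing over $t\ge 1$ then completes the proof.
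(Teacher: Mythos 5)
Your proposal is correct and follows essentially the same route as the paper's proof: establish the termwise identity $k_t(u;G)=k_t(u;G_1)+k_t(u;G_2)-k_t(u;H)$ and then sum with weights $1/t$ via Definition \ref{Def1}. Your write-up is in fact a bit more careful than the paper's, since you justify the termwise identity directly from the edge hypothesis (and treat $t=1$ separately), whereas the paper passes through the looser claim that no clique meets both $V(G_1)\setminus V(H)$ and $V(G_2)\setminus V(H)$.
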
	
\begin{proof} Given  $ G_1 \cap G_2=H$, there are no  cliques containing vertices
from both $  V(G_1) \backslash V(H) $ and  $  V(G_2) \backslash V(H) $,
and then every clique is entirely contained within  either $G_1$ or $G_2$. Thus, for each $i$, we have
$k_i(u;G)=k_i(u;G_1) +k_i(u;G_2) - k_i(u;H).$
Consequently, 	
\begin{align*}
w_G(u)  & =\sum_i \frac{1}{i} k_i(u;G) \\
&  = \sum_i \frac{1}{i} (k_i(u;G_1) +k_i(u;G_2) - k_i(u;H)) \\
& = w_{G_1}(u) + w_{G_2}(u) - w_H(u).
\end{align*}
This  completes the proof.		
\end{proof}
As a direct consequence of Theorem \ref{first}, we obtain the following result:

\begin{theorem}\label{third}
	Let $G$ be the union of graphs $G_1$ and $G_2$ with intersection $G_1 \cap G_2 = H$. If $G$ contains no clique containing edges from both of $E(G_1)\backslash E(H)$ and $E(G_2)\backslash E(H)$, then
	$$k(G) = k(G_1) + k(G_2) - k(H).$$
\end{theorem}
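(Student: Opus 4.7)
The plan is to derive this identity directly from Theorems \ref{second} and \ref{first} by applying the local weight identity at each vertex of $V(H)$ and handling the vertices outside $V(H)$ separately. First I would partition
$$V(G) \;=\; \bigl(V(G_1)\setminus V(H)\bigr) \,\sqcup\, V(H) \,\sqcup\, \bigl(V(G_2)\setminus V(H)\bigr),$$
which is legitimate because $G_1\cap G_2 = H$ forces $V(G_1)\cap V(G_2)=V(H)$.

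The second step is to observe that for $u \in V(G_1)\setminus V(H)$ every clique of $G$ containing $u$ lies inside $G_1$, hence $w_G(u)=w_{G_1}(u)$. The justification is short: any neighbor $v$ of $u$ in $G$ satisfies $uv\in E(G_1)\cup E(G_2)$, and since $u\notin V(G_2)$ the edge $uv$ must lie in $E(G_1)$, forcing $v\in V(G_1)$; so any clique of $G$ through $u$ uses only edges and vertices of $G_1$. By symmetry $w_G(u)=w_{G_2}(u)$ for $u \in V(G_2)\setminus V(H)$, while for $u \in V(H)$ Theorem \ref{first} (whose hypothesis is precisely the one assumed here) yields $w_G(u)=w_{G_1}(u)+w_{G_2}(u)-w_H(u)$.

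Finally I would substitute these three expressions into the global identity $k(G)=1+\sum_{u\in V(G)}w_G(u)$ of Theorem \ref{second}. The sum regroups cleanly as
\begin{align*}
k(G) &= 1 + \sum_{u\in V(G_1)\setminus V(H)} w_{G_1}(u) + \sum_{u\in V(H)} \bigl(w_{G_1}(u)+w_{G_2}(u)-w_H(u)\bigr) \\
&\quad + \sum_{u\in V(G_2)\setminus V(H)} w_{G_2}(u) \\
&= 1 + \sum_{u\in V(G_1)} w_{G_1}(u) + \sum_{u\in V(G_2)} w_{G_2}(u) - \sum_{u\in V(H)} w_H(u),
\end{align*}
and a second invocation of Theorem \ref{second} applied separately to $G_1$, $G_2$ and $H$ converts the right-hand side into $k(G_1)+k(G_2)-k(H)$.

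There is essentially no obstacle: the content of the result lies entirely in Theorem \ref{first}, and the present statement is a straightforward aggregation. The only point that needs a line of care is the bookkeeping of the $+1$ constants from the empty clique, which must combine as $1+1-1=1$ across the inclusion-exclusion so that nothing is lost.
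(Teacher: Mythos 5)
Your proposal is correct and follows essentially the same route as the paper: split $V(G)$ into $V(G_1)\setminus V(H)$, $V(H)$ and $V(G_2)\setminus V(H)$, use the local identity of Theorem \ref{first} on $V(H)$ and $w_G(u)=w_{G_i}(u)$ off $V(H)$, then aggregate via Theorem \ref{second}, watching the $1+1-1$ from the empty clique. One small caveat: your one-line justification that every clique through $u\in V(G_1)\setminus V(H)$ lies in $G_1$ only shows its vertices lie in $V(G_1)$; to conclude its edges lie in $E(G_1)$ (an edge of the clique joining two $V(H)$-vertices could a priori belong only to $G_2$) you again need the no-mixed-clique hypothesis, a point the paper's own proof likewise passes over silently.
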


\begin{proof}
	From Definition \ref{Def1}, for each vertex $u$:\\
	- If $u \in V(H)$, then $k_i(u;G)=k_i(u;G_1) +k_i(u;G_2) - k_i(u;H)$.\\
	- If $u \in V(G_1) \setminus V(H)$, then $k_i(u;G) = k_i(u;G_1)$.\\
	- If $u \in V(G_2) \setminus V(H)$, then $k_i(u;G) = k_i(u;G_2)$.
	
	Thus,
	\begin{align*}
	k(G) & = \sum_{u \in V(G)} w_G(u) + 1 \\
	& = \sum_{u \in V(G_1) \setminus V(H)} w_{G_1}(u) + \sum_{u \in V(G_2)
		 \setminus V(H)} w_{G_2}(u) + \sum_{u \in V(H)} (w_{G_1}(u) + w_{G_2}(u) - w_H(u)) + 1 \\
	& = k(G_1) + k(G_2) - k(H).
	\end{align*}
	This completes the proof.
\end{proof}

\noindent {\bf Remark 1.} In Theorem \ref{third}, the condition that $G$ contains no clique containing edges from both of $E(G_1)\backslash E(H)$ and $E(G_2)\backslash E(H)$ if necessary. Let $G=(V,E)$ with $V=\{1,2,3\}$ and $E=\{12,13,23\}$.
Suppose that $G_1=(V_1,E_1)$ with $V_1=\{1,2\}$ and $E_1=\{12\}$, and $G_2=(V_2,E_2)$ with $V_2=\{1,2,3\}$ and $E_1=\{13,23\}$. Then $k(G) \neq  k(G_1) + k(G_2) - k(G_1\cap G_2).$


\begin{example} Let $G$ be the  graph depicted in Figure 1. Define $G_1:=G[\{1,2,3,4\}]$, $G_2:=G[\{3,4,5,6\}]$
and $H:=G[\{3,4\}]$. It can be verified that
$G=G_1\cup G_2$ and  $H=G_1\cap G_2$. Then $G$ contains no clique containing edges from both of $E(G_1)\backslash E(H)$ and $E(G_2)\backslash E(H)$. Therefore,
$$w_G(1)=w_{G_1}(1)=\frac{15}{4}, ~~w_G(3)=w_{G_1}(3)+w_{G_2}(3)-w_{H}(3)=\frac{15}{4}+\frac{19}{6}-\frac{3}{2}=\frac{65}{12},~~w_G(5)=w_{G_2}(5)=\frac{7}{3}$$
and
$$k(G)=k(G_1)+k(G_2)-k(H)=16+12-4=24.$$

\vspace{5mm}

\begin{tikzpicture}

\coordinate (A) at (0,0);
\coordinate (B) at (0,2);
\coordinate (C) at (1,1);
\draw (A) -- (B) -- (C) -- cycle;
\coordinate (D) at (2,1);
\draw (C) -- (D);
\draw (A) -- (D);
\draw (B) -- (D);
\coordinate (E) at (3,2);
\coordinate (F) at (3,0);
\draw (C) -- (E);
\draw (C) -- (F);
\draw (D) -- (E);
\draw (D) -- (F);
\foreach \point in {A,B,C,D,E,F}
    \fill (\point) circle (2pt);
\node at (A) [below] {$1$};
\node at (B) [above] {$2$};
\node at (C) [below] {$3$};
\node at (D) [below] {$4$};
\node at (E) [below] {$5$};
\node at (F) [below] {$6$};

\coordinate (A) at (7,0);
\coordinate (B) at (7,2);
\coordinate (C) at (8,1);
\draw (A) -- (B) -- (C) -- cycle;
\coordinate (D) at (9,1);
\draw (C) -- (D);
\draw (A) -- (D);
\draw (B) -- (D);
\foreach \point in {A,B,C,D}
    \fill (\point) circle (2pt);
\node at (A) [below] {$1$};
\node at (B) [above] {$2$};
\node at (C) [below] {$3$};
\node at (D) [below] {$4$};

\coordinate (C) at (11,1);
\coordinate (D) at (12,1);
\coordinate (E) at (13,2);
\coordinate (F) at (13,0);
\draw (C) -- (D);
\draw (C) -- (E);
\draw (C) -- (F);
\draw (D) -- (E);
\draw (D) -- (F);
\foreach \point in {C,D,E,F}
    \fill (\point) circle (2pt);
\node at (C) [below] {$3$};
\node at (D) [below] {$4$};
\node at (E) [below] {$5$};
\node at (F) [below] {$6$};

\node at (1.5, -1) {G};
\node at (8, -1) {$G_1$};
\node at (12, -1) {$G_2$};

\node at (7, -2) {Figure ~1.};

\end{tikzpicture}

\end{example}


\begin{example}
Let $m$, $r$, and $p$  be integers satisfying $m=\binom{r}{2}+p+1$ and $0\le p<r-1$.
 The colex graph $\mathcal{C}(r+1,m)$ can be considered as the union of two graphs $K_r$ and $K_{p+2}$ such that their intersection  is $K_{p+1}$.
 Then, applying Theorem \ref{first}, we find
 $$w_{\mathcal{C}(r+1,m)}(\Delta)=\max \{w_{\mathcal{C}(r+1,m)}(v) | v\in V(\mathcal{C}(r+1,m))\}=\frac{2^r-1}{r}+\frac{2^{p+2}-1}{p+2}-\frac{2^{p+1}-1}{p+1}.$$
Furthermore, by Theorem  \ref{third}, we have
 $$ k(\mathcal{C}(r+1,m))=2^r+2^{p+2}-2^{p+1}.$$
\end{example}

\begin{example}
Let $r$ and $t$ be integers with $r\ge 2t$ and $t\ge 1$. $K_r-tK_2$ can be considered as the union of two copies of $K_{r-1}-(t-1)K_2$ such that their intersection  is $K_{r-2}-(t-1)K_2$.
Applying Theorem \ref{third} again, we have
$$k(K_r-tK_2) = 2k(K_{r-1}-(t-1)K_2) -k(K_{r-2}-(t-1)K_2).$$
\end{example}


%
Next, we introduce the concept $k(e;G)$, which denotes the number of cliques in $G$ that contain the edge $e$. In contrast to $w_G(u)$, we refer it as  the clique weight with respect to a given  edge.
 \begin{definition}\label{Def2}
  Let $G$ be an arbitrary graph and $e\in E(G)$. We use  $k(e;G)$ to denote the number of cliques in $G$ that contain the edge $e$.
  For simplicity, we write $k(e;G)$ as $k(e)$ if there is no ambiguity.
\end{definition}

As is well known, for an arbitrary graph $G$ and an edge $e\in E(G)$, all cliques contained in $G$
 can be partitioned into two parts: those containing the edge $e$ and those not containing the edge $e$.
  The cardinality of the former is $k(e;G)$ and the cardinality of the latter is $k(G-e)$. Therefore, we have
\begin{theorem}\label{forth}
	 Let $G$ be an arbitrary graph and $e\in E(G)$. Then
	$$k(G) = k(G-e) + k(e;G).$$
\end{theorem}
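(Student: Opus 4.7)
The plan is to carry out exactly the partition that the paragraph preceding Theorem~\ref{forth} outlines: sort the cliques of $G$ according to whether they contain the edge $e$ or not, and identify each of the two resulting classes with one of the terms on the right-hand side.

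First, I would write $e = uv$ and let $\mathcal{K}(G)$ denote the family of all cliques of $G$ (including the empty clique and the singleton cliques, consistent with the convention used in Theorem~\ref{second}). I would split $\mathcal{K}(G) = \mathcal{K}_{e} \sqcup \mathcal{K}_{\bar e}$, where $\mathcal{K}_{e}$ collects those cliques whose vertex set contains both $u$ and $v$, and $\mathcal{K}_{\bar e}$ collects the remaining cliques. By Definition~\ref{Def2}, we immediately have $|\mathcal{K}_{e}| = k(e;G)$.

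Second, I would verify the set equality $\mathcal{K}_{\bar e} = \mathcal{K}(G-e)$. For the forward inclusion, any clique $C \in \mathcal{K}_{\bar e}$ omits at least one endpoint of $e$, so every edge of the induced subgraph $G[C]$ lies in $E(G) \setminus \{e\} = E(G-e)$; hence $C$ is a clique of $G-e$. For the reverse, since $G-e$ is a spanning subgraph of $G$, every clique of $G-e$ is automatically a clique of $G$, and it cannot contain both $u$ and $v$ because the edge $uv$ is absent in $G-e$. Combining the two identifications,
\begin{equation*}
k(G) = |\mathcal{K}_{e}| + |\mathcal{K}_{\bar e}| = k(e;G) + k(G-e),
\end{equation*}
which is the claimed formula.

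There is essentially no obstacle here; the whole argument is a definition chase. The only point that merits explicit attention is the treatment of the empty clique and the singleton cliques, which contain no edges and thus automatically fall into $\mathcal{K}_{\bar e}$ and remain cliques of $G-e$, so the count on both sides handles the lower-order terms consistently.
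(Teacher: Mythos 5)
Your proposal is correct and follows exactly the same partition the paper uses (cliques containing $e$ versus cliques avoiding $e$, the latter identified with the cliques of $G-e$); the paper simply states this as immediate, while you spell out the bijection and the handling of the empty and singleton cliques. No further comment is needed.
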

%

To conclude this section, we introduce two lemmas related to the clique weight of edges. These lemmas will play a crucial role in the proof presented in the final chapter.
\begin{lemma}\label{key3}
	Let $\Gamma$ be the graph obtained from two disjoint complete graphs $P:=K_{2p}$ and $Q:=K_{q}$ by adding  $2p$ edges between $P$ and $Q$, where $p\ge 1$ and $q\ge 1$.
Suppose that $d_P(w)\le p$ and $d_Q(u)\le p$ hold for any $w\in V(Q)$ and $u\in V(P)$. Then
$$k(e;\Gamma)\le 2^p$$
for any edge  $e$ between $P$ and $Q$.
\end{lemma}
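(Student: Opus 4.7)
The plan is to reduce $k(e;\Gamma)$ to counting ``bicliques'' in a bipartite auxiliary graph, and then bound that count via a short two-case analysis. Write $e=uw$ with $u\in V(P)$ and $w\in V(Q)$, and set $X:=N_P(w)\setminus\{u\}$, $Y:=N_Q(u)\setminus\{w\}$, and $F:=\{xy\in E(\Gamma): x\in X,\ y\in Y\}$. Since $P$ and $Q$ are complete, every clique of $\Gamma$ containing $e$ has the form $\{u,w\}\cup A\cup B$ with $A\subseteq X$, $B\subseteq Y$, and $A\times B\subseteq F$; writing $a:=|X|$, $b:=|Y|$, $m:=|F|$, this gives
\[
k(e;\Gamma)=B(F):=\bigl|\{(A,B): A\subseteq X,\ B\subseteq Y,\ A\times B\subseteq F\}\bigr|.
\]
The degree hypotheses yield $a,b\le p-1$. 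Moreover, the cross edges incident to $\{u,w\}$ form the set $\{uw\}\cup\{wx\}_{x\in X}\cup\{uy\}_{y\in Y}$ of size $a+b+1$, so $m\le 2p-a-b-1$; equivalently $a+b+m\le 2p-1$.

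Next I would record two simple upper bounds on $B(F)$. Ignoring the biclique constraint gives at once $B(F)\le 2^a\cdot 2^b=2^{a+b}$. Separately, partitioning off the pairs with $A=\emptyset$ or $B=\emptyset$,
\[
B(F)=2^a+2^b-1+N^*(F),\qquad N^*(F):=\bigl|\{(A,B): A,B\ne\emptyset,\ A\times B\subseteq F\}\bigr|,
\]
and observing that $(A,B)\mapsto A\times B$ sends each such biclique injectively into the family of non-empty subsets of $F$ (the projections of $A\times B$ onto $X$ and $Y$ recover $A$ and $B$), I obtain $N^*(F)\le 2^m-1$ and hence $B(F)\le 2^a+2^b+2^m-2$.

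The argument then splits into two cases. If $a+b\le p$, the first bound already yields $B(F)\le 2^{a+b}\le 2^p$. If $a+b\ge p+1$ (which forces $p\ge 3$, since $a,b\le p-1$), then $m\le 2p-a-b-1\le p-2$, and I would maximize the second bound on the boundary $a+b+m=2p-1$. Convexity of $a\mapsto 2^a+2^{s-a}$ with $s:=a+b$ fixed pushes the optimum to $a=p-1$, and writing $t:=b=s-p+1\in[2,p-1]$ reduces the expression to $2^{p-1}+2^t+2^{p-t}-2$. A short convexity check shows $2^t+2^{p-t}$ attains its maximum on $[2,p-1]$ at $t=p-1$ once $p\ge 3$, yielding the overall bound $2^p$, with equality attained at $(a,b,m)=(p-1,p-1,1)$. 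The main obstacle I anticipate is the final optimization, but it reduces to the straightforward one-dimensional check just described; the rest of the proof is routine.
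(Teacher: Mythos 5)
Your proof is correct, and it takes a genuinely different route from the paper's. The paper fixes $U=N_P(w)$, $W=N_Q(u)$ and runs a structural case analysis on the maximal cliques of $\Gamma$ containing $e$ (only $\Gamma[U\cup\{w\}]$ and $\Gamma[W\cup\{u\}]$; one of them not maximal; at least three maximal cliques), bounding $k(e;\Gamma)$ by inclusion--exclusion over those maximal cliques together with edge-budget inequalities such as $|U||W^*|\le 2p$. You instead encode every clique through $e$ as a pair $(A,B)$ with $A\subseteq X$, $B\subseteq Y$, $A\times B\subseteq F$ (valid precisely because $P$ and $Q$ are complete), and combine two clean counting bounds --- $2^{a+b}$, and $2^a+2^b+2^m-2$ via the injection $(A,B)\mapsto A\times B$, whose projections recover $A$ and $B$ --- with the linear budget $a+b+m\le 2p-1$ and a one-variable convexity check; the endpoint computation $2^{p-1}+2^t+2^{p-t}-2\le 2^p$ for $t\in[2,p-1]$, $p\ge 3$ is right, and Case 1 covers $a+b\le p$ outright. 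What your route buys: it treats all configurations uniformly (the paper's Case 3 writes the inclusion--exclusion as though there were exactly three maximal cliques and only sketches the final numerical step, whereas your optimization is complete), and it uses the hypotheses only through $a,b\le p-1$ and the total of $2p$ cross edges, so it is in fact slightly more general. The paper's approach makes the near-extremal structure more visible (e.g.\ $\min\{|U|,|W^*|\}=2$, $\max\{|U|,|W^*|\}=p$ in its Case 2); your remark that equality occurs at $(a,b,m)=(p-1,p-1,1)$ records the same information in your parametrization.
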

\begin{proof}
Consider an edge  $e=uw\in E(\Gamma)$, where $u\in V(P)$ and $w\in V(Q)$.
Let $F$ be a maximal clique containing the edge $e$. 
Then the union of the vertex set $\{u,w\}$ and an arbitrary subset of $V(F)\backslash \{u,w\}$ induces a clique containing the edge $e$.
 Therefore, the number of those cliques that are subsets of $F$ and contain the edge $e$ is exactly
  $$2^{|V(F)|-2}.$$

Define $U:=N_{P}(w)$ and $W:=N_{Q}(u)$. When $\min \{|W|,|U|\}=1$, for instance, $|W|=1$,
the subgraph  $\Gamma[U\cup W]$ forms the unique maximal clique of $\Gamma$ that contains the edge $e$. In this case, $\Gamma[U\cup W]$ is complete and thus
 $k(e;\Gamma)$ is given by
 $2^{|U\cup W|-2}$. Since $|U|\le p$, it follows that $k(e;\Gamma)\le 2^{p-1}.$ This completes the proof.\vspace{2mm}

In the subsequent discussion, we always assume that  $\min \{|W|,|U|\}\ge 2$.
It is readily observed that both $\Gamma[U\cup \{w\}]$ and $\Gamma[W\cup \{u\}]$ are complete and thus both of them are cliques  containing the edge $e$.
For convenience, we define  $F_1:=\Gamma[U\cup \{w\}]$ and $F_2:=\Gamma[W\cup \{u\}]$.
We partition our discussion into the following three cases:\vspace{2mm}

\noindent {\bf Case 1.} $F_1$ and $F_2$ are the only two maximal cliques of $\Gamma$ containing the edge $e$.

\noindent As established in the preceding discussion, there are precisely $2^{|U\cup \{w\}|-2}$
cliques contained within $F_1$
that include the edge $e$, and similarly, exactly $2^{|W\cup \{u\}|-2}$
cliques contained within $F_2$
that include the edge $e$. Given that  $|U|\le p$ and $|W|\le p$, applying the principle of inclusion-exclusion,
we can deduce that $k(e;\Gamma)\le 2^{|U|-1}+2^{|W|-1}-2^{|V(F_1\cap F_2)|-2}< 2^p.$ This completes the proof.\vspace{2mm}

\noindent {\bf Case 2.} At least one of $F_1$ and $F_2$ is not the maximal clique of $\Gamma$ that contains the edge $e$.

\noindent Assume without loss of generality that $F_1$  is not  maximal. Then there exists a maximal clique, denoted by $F_3$, such that $V(F_3)\cap U=U$ and
$\{w\}\subsetneq V(F_3)\cap W$. Set $W^*=V(F_3)\cap W$. Given that there are $2p$ edges between $P$ and $Q$, it follows that
$|U||W^*|\le 2p$, which implies that $2^{|U|+|W^*|-2}\le 2^p$. Equality holds if and only if $\min \{|U|,|W^*|\}= 2$, $\max  \{|U|,|W^*|\}= p$, and $W^*=W$.
Applying the principle of inclusion-exclusion, we have
\begin{align*}
k(e;\Gamma)&=2^{|V(F_3)|-2}+2^{|V(F_2)|-2}-2^{|V(F_3\cap F_2)|-2}\\
&=2^{|U|+|W^*|-2}+2^{|W|-1}-2^{|W^*|-1}\\
&\le 2^p.
\end{align*}
Thus the result follows in such a case.\vspace{2mm}


\noindent {\bf Case 3.} There exist at least three maximal cliques of $\Gamma$  that contain the edge $e$.

Obviously, both of $F_1$ and $F_2$ are maximal clique of $\Gamma$  that contains the edge $e$.
Let now $F_4$ be another maximal clique of $\Gamma$, distinct from  $F_1$ and $F_2$,  that contains the edge $e$.
Define $U^{**}:=V(F_4)\cap U$ and $W^{**}:=V(F_4)\cap W$. Because $F_1$, $F_2$ and $F_4$ are all maximal cliques,
we have $\{u\}\subsetneq U^{**} \subsetneq U$ and $\{w\}\subsetneq W^{**} \subsetneq W$. Therefore, we have $\max  \{|U|,|W|\}\ge 3$,
$p\ge 3$, $\min \{|U^{**}|,|W^{**}|\}\ge 2$ and $|U^{**}||W^{**}|\le 2p-2$.
%
Then, combining with the principle of inclusion-exclusion and the fact that $V(F_1\cap F_2)=\{u,w\}$, we have
\begin{align*}
k(e;\Gamma)&=2^{|V(F_1)|-2}+2^{|V(F_2)|-2}+2^{|V(F_4)|-2}-2^{|V(F_1\cap F_4)|-2}-2^{|V(F_2\cap F_4)|-2}\\
&-2^{|V(F_1\cap F_2)|-2}+2^{|V(F_1\cap F_2\cap F_4)|-2}\\
&\le 2^{|U|-1}+2^{|W|-1}+2^{|U^{**}|+|W^{**}|-2}-2^{|U^{**}|-1}-2^{|W^{**}|-1}\\
&\le 2^p.
\end{align*}
(Given that $|U^{**}||W^{**}|\le 2p-2$, it follows that $|U^{**}|+|W^{**}|-2\le p-1$. Consequently, the desired result is achieved if at least two of the quantities
 $|U|-1$, $|V|-1$ and $|U^{**}|+|W^{**}|-2$ is no more than $p-1$, since the maximum value among them is no more than $p-1$.
Therefore, we need at least two of these quantities to be equal to $p-1$, for instance, $|U|=|V|=p$ and $|U^{**}|=|W^{**}|=2$. The inequality still holds.)
Thus the result follows in such a case.\vspace{2mm}

Up to now, the proof is complete.
\end{proof}

\begin{lemma}\label{key4}
	Let $\Gamma$ be the graph obtained from two disjoint graphs $P:=K_{2p}-tK_2$ and $Q:=K_{q}$ by adding  $2p$ edges between $P$ and $Q$, where $p$, $t$ and $q$ are positive integer numbers with $p\ge t$.
	Suppose that $N_P(w)=\{u_i|i=1,2,\ldots,p+t\}$ such that  $u_{2i-1}u_{2i}\notin E(P)$ for each $i\in [t]$. Denote by $k(\cup_{i=1}^{2t}e_i;\Gamma)$ the number of cliques of $\Gamma$ including at least one edge of $\sum_{i=1}^{2t}e_i$. Then
	$$k(\cup_{i=1}^{2t}e_i;\Gamma)\le 2^{p-t}+k(K_{p+t+1}-tK_2)-k(K_{p+t}-tK_2).$$
\end{lemma}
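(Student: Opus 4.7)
The plan is to decompose the cliques counted by $k(\cup_{i=1}^{2t} e_i; \Gamma)$ according to whether they meet $V(Q) \setminus \{w\}$, and then bound the two parts separately. Every clique of $\Gamma$ containing some $e_i = w u_i$ with $i \le 2t$ contains $w$ together with at least one vertex of $X := \{u_1, \ldots, u_{2t}\}$. Setting $Y := \{u_{2t+1}, \ldots, u_{p+t}\}$ and $Z := V(Q) \setminus \{w\}$, I would split these cliques into Type $A$ (contained in $\{w\} \cup V(P)$) and Type $B$ (meeting $Z$), compute $|A|$ exactly, and bound $|B|$ by $2^{p-t+1}$.

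For Type $A$, the subgraph $\Gamma[\{w\} \cup N_P(w)]$ is isomorphic to $K_{p+t+1} - tK_2$, with $w$ a vertex outside every missing edge; hence the number of its cliques containing $w$ equals $k(K_{p+t+1} - tK_2) - k(K_{p+t} - tK_2)$. Among these, the cliques disjoint from $X$ lie inside the complete graph $\{w\} \cup Y$ on $p-t+1$ vertices, contributing exactly $2^{p-t}$. Subtracting gives
\[
|A| = k(K_{p+t+1} - tK_2) - k(K_{p+t} - tK_2) - 2^{p-t}.
\]

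Bounding $|B|$ by $2^{p-t+1}$ is the main obstacle. A Type $B$ clique has the form $\{w\} \cup U \cup V$ with $U \subseteq X \cup Y$ a clique of $K_{p+t} - tK_2$ satisfying $U \cap X \neq \emptyset$, $\emptyset \neq V \subseteq Z$, and every pair in $U \times V$ a cross edge of $\Gamma$. Since only $2p - (p+t) = p - t$ cross edges are incident to $Z$, the bipartite completeness condition forces $|U|\cdot|V| \le p - t$. To exploit this, I would work inside $\Gamma[N_\Gamma(w)]$ and start from the subgraph with all $X$-to-$Z$ cross edges removed, where no Type $B$ clique can exist. Reinserting the $X$-to-$Z$ cross edges one at a time and applying Theorem~\ref{forth} at each step, I would express the total growth of $|B|$ as a telescoping sum of quantities of the form $k(f;\cdot)$ over the reinserted edges $f$. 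Each such term I would control by a maximal-clique case analysis in the spirit of Lemma~\ref{key3}, the edge budget $|U|\cdot|V| \le p-t$ being what keeps the running total below $2^{p-t+1}$; this is where the bulk of the technical work lies.

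Combining the two estimates yields
\[
|A| + |B| \le k(K_{p+t+1} - tK_2) - k(K_{p+t} - tK_2) - 2^{p-t} + 2^{p-t+1} = 2^{p-t} + k(K_{p+t+1} - tK_2) - k(K_{p+t} - tK_2),
\]
which is the claimed inequality.
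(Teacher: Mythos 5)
Your reduction is set up correctly: Type $A$ cliques (those inside $\{w\}\cup V(P)$ containing some $e_i$) are exactly the cliques of $\Gamma[\{w\}\cup N_P(w)]\cong K_{p+t+1}-tK_2$ that contain $w$ and meet $X$, so your exact count $|A|=k(K_{p+t+1}-tK_2)-k(K_{p+t}-tK_2)-2^{p-t}$ is right, and the final arithmetic shows the lemma would follow from $|B|\le 2^{p-t+1}$. This decomposition is essentially the same one the paper uses (cliques staying in $N_P(w)\cup\{w\}$ versus cliques reaching into $Q\setminus\{w\}$), with the small refinement that you count the first part exactly rather than bounding it, which buys you a factor of $2$ of slack on the second part.

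The genuine gap is that the bound $|B|\le 2^{p-t+1}$ — which is the entire content of the lemma beyond bookkeeping — is never proved; you explicitly defer it (``this is where the bulk of the technical work lies''). Moreover, the telescoping plan as stated does not obviously close: reinserting the at most $p-t$ cross edges between $X$ and $Z$ one at a time and summing $k(f;\cdot)$ via Theorem \ref{forth} cannot be finished with a uniform per-edge bound, because a single reinserted edge $uz$ can already satisfy $k(uz;\cdot)\approx 2^{p-t-1}$ (when all cross edges to $Z$ are concentrated at one vertex $u$), so a term-by-term estimate in the spirit of Lemma \ref{key3} gives only $(p-t)\,2^{p-t-1}$, which exceeds $2^{p-t+1}$ once $p-t\ge 5$. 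What is needed is an amortized or extremal-configuration argument: the paper handles this part by observing that at most $p-t$ cross edges run from $\{u_1,\dots,u_{p+t}\}$ to $Q\setminus\{w\}$ and that merging the $Q$-neighborhoods of two vertices $u_i,u_j$ onto one of them only increases the count, so the worst case has all such edges at a single $u_i$; then every clique of this type lies in a complete graph on $u_i$, $w$ and at most $p-t$ vertices of $Q$, giving the bound $2^{p-t}$. You would need to carry out such a shifting argument (or an equivalent amortized bound over your telescoping sum) to obtain $|B|\le 2^{p-t+1}$; as written, the proposal asserts the key inequality rather than proving it.
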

\begin{proof}
  We  partition $k(\cup_{i=1}^{2t}e_i;\Gamma)$ into two types: 1) cliques containing
  at least one edge $u_iw$, for some $i$, and at least one vertices from $N_{Q}(u_i) \backslash \{w\}$; and 2)
 cliques containing at least one edge $u_iw$, for some $i$, and no vertices in $N_{Q}(u_i) \backslash \{w\}$.
  Given that there are $2p$ edges between $P$ and $Q$, and $d_P(w)=p+t$, it follows that $\sum_{i=1}^{2t}d_Q(u_i)\le p+t$.
  For $i=1,2,\ldots,2t$, denote by $H_i$ the maximum clique containing
  the edge $u_iw$ and at least one vertices from $N_{Q}(u_i) \backslash \{w\}$.
  Obviously, $H_i=\emptyset$ if $d_Q(u_i)=1$, and we can concentrate the neighbors of vertices $u_i$ and $u_j$
 in $Q$ onto one of the vertices, thereby increasing the sum of the number of cliques of $H_i$ and $H_j$
if $d_Q(u_i)>1$ and $d_Q(u_j)>1$.
  Therefore, to maximize  the first type, we can set $d_Q(u_i)=1$ for $i=1,2,\ldots,2t-1$, and $d_Q(u_{2t})=p-t+1.$
 Consequently, the sum of those cliques coming from the first type is bounded by $2^{p-t}$.
 Since each clique containing the edge $e_i$ and no vertices in $N_{Q}(u_i) \backslash \{w\}$ is a subgraph of
  $\Gamma[N_P(w)\cup \{w\}]$,
  the sum of those cliques coming from the second type is bounded by $k(\Gamma[N_P(w)\cup \{w\}])-k(\Gamma[N_P(w)])$.
 Consequently,
 $$k(\cup_{i=1}^{2t}e_i;\Gamma)\le 2^{p-t}+k(K_{p+t+1}-tK_2)-k(K_{p+t}-tK_2)$$ since $\Gamma[N_P(w)\cup \{w\}]=K_{p+t+1}-tK_2$ and $\Gamma[N_P(w)]=K_{p+t}-tK_2$.
\end{proof}

\section{Proof of Theorem \ref{maina}}
In this section, we will complete the proof of Theorem \ref{maina} and Theorem \ref{main0}.
As is well known that Theorem \ref{maina} is a generalization of Theorem \ref{main0}. Therefore, we below focus on the proof of
 Theorem \ref{maina}.
 Firstly, we need to establish a lemma.
\begin{lemma}\label{key11}
	Let $r$, $s$ and $p$ be fixed integer numbers with $r\ge 3$  and $0\le s\le r$. Then the inequality $$2^{r+1}+2^s-1> (r+1+s)(\frac{2^r-1}{r}+\frac{2^{p+2}-1}{p+2}-\frac{2^{p+1}-1}{p+1})$$
	holds if one of  the following  conditions is satisfied:\\
	{\em 1)} $r\geq12$ and $ 0\le p\le \frac{2r}{3}$;\\
	{\em 2)} $7\le r\le 11$ and $0\le p\le r-4$;\\
	{\em 3)} $4\le r\le 6$ and $0\le p\le r-3$;\\
 {\em 4)} $r=3$ and $0\le p\le 1$,  except for the case where $r=3$, $s=2$ and $p=1$.
	\end{lemma}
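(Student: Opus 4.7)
The plan is to reduce the three-parameter inequality to a finite family of numerical checks by combining monotonicity in $p$ with sharp comparisons in $s$ and $r$. As a first step, placing the two fractions on the right over the common denominator $(p+1)(p+2)$ gives
\begin{equation*}
\frac{2^{p+2}-1}{p+2}-\frac{2^{p+1}-1}{p+1}=\frac{p\cdot 2^{p+1}+1}{(p+1)(p+2)}=:g(p),
\end{equation*}
and a short computation of $g(p+1)-g(p)$ shows $g$ is strictly increasing on $p\geq 0$. Since $r+1+s>0$, the right-hand side of the target inequality is increasing in $p$ for fixed $r,s$, so in each of the four cases it suffices to verify the inequality at the largest admissible value of $p$, which I will denote by $p^{\star}$ (respectively $\lfloor 2r/3\rfloor$, $r-4$, $r-3$, or $1$).

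Next, I would isolate the dominant exponential terms. Writing $L$ and $R$ for the two sides, a direct rearrangement yields
\begin{equation*}
L-R=\frac{(r-1-s)\,2^r+(r+1+s)}{r}+2^s-1-(r+1+s)\,g(p^\star),
\end{equation*}
so the target inequality is equivalent to
\begin{equation*}
\frac{(r-1-s)\,2^r}{r}+2^s \;>\; (r+1+s)\,g(p^\star)-\frac{r+1+s}{r}+1.\qquad(\ast)
\end{equation*}
The left-hand side $h(s):=\frac{(r-1-s)\,2^r}{r}+2^s$ has positive second difference, since $h(s+1)-h(s)=2^s-2^r/r$ is strictly increasing in $s$; consequently $h$ attains its minimum over $\{0,1,\ldots,r\}$ near $s^\star\approx r-\log_2 r$, and its minimum value is of order $2^r\log_2 r/r$. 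Using the crude estimates $g(p^\star)\leq 2^{p^\star+2}/(p^\star+2)$ and $r+1+s\leq 2r+1$, the right-hand side of $(\ast)$ is at most a polynomial-in-$r$ multiple of $2^{p^\star+2}$.

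For condition (1), with $p^\star\leq 2r/3$, this reduces $(\ast)$ to a comparison between a quantity of order $2^r\log_2 r/r$ and one of order $\mathrm{poly}(r)\cdot 2^{2r/3}$, which holds once $r$ is large enough; careful bookkeeping of the polynomial constants shows that the precise threshold is $r\geq 12$. For conditions (2)--(4) the ranges of $r$ and $p^\star$ are finite (roughly a dozen values in total), and the stronger bound $p^\star\leq r-3$ makes $(\ast)$ comfortable; I would verify the residual list by direct tabulation of $L-R$ over all admissible $(r,s,p^\star)$ with $s\in\{0,1,\ldots,r\}$. The excluded triple $(r,s,p)=(3,2,1)$ produces $L=R=19$, confirming the necessity of that exception.

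The main obstacle will be that $L-R$ is not monotone in $s$, so one cannot reduce to checking only the endpoints $s=0$ and $s=r$; the asymptotic estimate in case (1) must therefore be uniform in $s$. The cleanest way to manage this is to keep $h(s)$ together and use its discrete minimum as a single uniform lower bound, rather than estimating $\frac{(r-1-s)\,2^r}{r}$ and $2^s$ separately, and this uniform-in-$s$ analysis is what ultimately pins down the cutoff $r\geq 12$.
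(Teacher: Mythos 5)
Your algebraic framework is sound and genuinely different from the paper's: the identity $\frac{2^{p+2}-1}{p+2}-\frac{2^{p+1}-1}{p+1}=\frac{p\cdot 2^{p+1}+1}{(p+1)(p+2)}=g(p)$ and its strict monotonicity (which correctly reduces each case to the largest admissible $p$, and also rescues $(r,s,p)=(3,2,0)$ from the equality at $(3,2,1)$), the rearrangement to $(\ast)$, and the discrete convexity of $h(s)=\frac{(r-1-s)2^r}{r}+2^s$ with minimum near $s\approx r-\log_2 r$ are all correct; the paper instead splits into $s=r$, $s\in\{r-1,\dots,r-4\}$ and $s\le r-5$ and compares against cruder upper bounds for the bracketed expression.

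The gap is in the decisive case 1): the estimates you actually state cannot deliver the threshold $r\ge 12$. At $r=12$, $p^{\star}=8$, the critical value is $s=9$, where $h(9)=\frac{2\cdot 2^{12}}{12}+2^9\approx 1194.7$ while $(r+1+s)\,g(8)=22\cdot\frac{4097}{90}\approx 1001$, so the true inequality holds with only modest slack; but your bound $g(p^{\star})\le \frac{2^{p^{\star}+2}}{p^{\star}+2}=102.4$ loses roughly a factor $2$ and gives $22\cdot 102.4\approx 2253$, so $(\ast)$ cannot be verified this way at $r=12$ (nor at $r=13,14,15$; the crude bookkeeping only succeeds from about $r\ge 16$). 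Moreover, the decoupling you advocate in the last paragraph (uniform minimum of $h$ against $r+1+s\le 2r+1$) fails at $r=12$ even with the sharper bound $g(p)<\frac{2^{p+1}}{p+2}$, since $25\cdot 51.2>1194.7$; it squeaks through only with $g(8)$ taken essentially exactly ($25\cdot 45.52\approx 1138<1194.7$). So the assertion that ``careful bookkeeping of the polynomial constants shows that the precise threshold is $r\ge 12$'' is unsubstantiated as written, and you also give no uniform-in-$r$ argument covering all $r\ge 12$ (e.g.\ an induction noting that the left side of $(\ast)$ at its minimizing $s$ roughly doubles when $r\mapsto r+1$ while the right side grows by at most about a factor $2^{2/3}$). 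The proof is repairable — keep $g(p^{\star})$ exactly, retain the coupling in $s$, add such a monotonicity-in-$r$ step, or simply extend your finite tabulation to $12\le r\le 15$ over all $s$ — but as it stands the core of the lemma is not proved.
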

\begin{proof}
		1)
		 For this case, we prove the statement by considering two subcases, namely \(s = r\) and \(s < r\).
		 First, assume that \(s = r\),
		 we define the function $h(s,p,r)$ as follows: $$h(s,p,r)=2^{r+1}+2^s-1- (r+1+s)(\frac{2^r-1}{r}+\frac{2^{p+2}-1}{p+2}-\frac{2^{p+1}-1}{p+1}).$$
		 When $r\ge 12,$  $s=r$ and $0\leq p\le \frac{2r}{3},$ one can verify that $$h(s,p,r)\geq h(12,p,12)>0.$$
		Next, consider the subcase \(s < r\).
		Note that
		$$\begin{aligned}
			\frac{2^{r}-1}{r} + \frac{2^{p+2}-1}{p+2} - \frac{2^{p+1}-1}{p+1}
			&<   \frac{2^{r}}{r} + \frac{p \cdot 2^{p+1}}{(p+1)(p+2)} + \frac{1}{(p+1)(p+2)}  \\
			&<  \frac{2^{r}}{r} + \frac{ 2^{p+1}}{p} + \frac{1}{2}  \\
			&\le \frac{2^{r+1}+3 \cdot 2^{\frac{2r}{3}+1}+r}{2r},
		\end{aligned}
		$$
		then it suffices to show that
		$$2r(2^{r+1} + 2^s -1) > (r+1+s) \left( 2^{r+1}+3 \cdot 2^{\frac{2r}{3}+1}+r \right).$$
		We define the function $f(s)$ on $s$ as follows:
		$$	\begin{aligned}
			f(s) &= 2r \left( 2^{r+1} + 2^{s}-1 \right) - \left( r+1+s \right) \left( 2^{r+1} + 3 \cdot 2^{\frac{2r}{3}+1} + r \right) \\
			&= r \cdot 2^{r+1} + r \cdot 2^{s+1} - 3r \cdot 2^{\frac{2r}{3}+1} - (s+1) \left( 2^{r+1} + 3 \cdot 2^{\frac{2r}{3}+1} \right) - r \left( r+3+s \right).
		\end{aligned}
		$$
		One can verify that
		$$	\begin{aligned}
			f(r-1) &= r \cdot 2^{r+1} + r \cdot 2^{r} - 3r \cdot 2^{\frac{2r}{3}+1} - r \left( 2^{r+1} + 3 \cdot 2^{\frac{2r}{3}+1} \right) - 2r \left( r+1 \right)\\
			&=  r \cdot 2^{\frac{2r}{3}+1}(2^{\frac{r}{3}} - 6) - 2r \left( r+1 \right)\\
			&>0,
		\end{aligned}
		$$where $f(r-1)$ is an  increasing function on $r$.
		Similarly, $f(s)>0$ when $r\ge 12$ and $s\in\{r-2,r-3,r-4\}.$
		For $s\le r-5$, we have
		$$	\begin{aligned}
			f(s) &= 2r \left( 2^{r+1} + 2^{s}-1 \right) - \left( r+1+s \right) \left( 2^{r+1} + 3 \cdot 2^{\frac{2r}{3}+1} + r \right) \\
			&\geq 2r \cdot 2^{r+1}   - \left( r+1+s \right) \left( 2^{r+1} + 3 \cdot 2^{\frac{2r}{3}+1} + r \right) \\
			&\geq 2r \cdot 2^{r+1}   - \left( r+1+r-5 \right) \left( 2^{r+1} + 3 \cdot 2^{\frac{2r}{3}+1} + r \right) \\
			&\geq 2^{r+3}- 3(2r-4) \cdot 2^{\frac{2r}{3}+1}-2r(r-2)\\
			&>0,
		\end{aligned}
		$$
		where $g(r)=2^{r+3}- 3(2r-4) \cdot 2^{\frac{2r}{3}+1}-2r(r-2)$ is an  increasing function on $r(r\geq12)$.

		Consequently, 1) follows.
		
		2) First, assume that \(s = r\),
		when $7\leq r\leq 11$ and $ 0\leq p\le r-4,$ one can verify that $h(s,p,r)\geq h(7,p,7)>0.$
		
		Next, consider the subcase \(s < r\).
		Similarly, we have
		$$\begin{aligned}
			 \frac{2^{r}-1}{r} + \frac{2^{p+2}-1}{p+2} - \frac{2^{p+1}-1}{p+1}
			& < \frac{2^{r}}{r} + \frac{p \cdot 2^{p+1}}{(p+1)(p+2)} + \frac{1}{(p+1)(p+2)} \\
			& < \frac{2^{r}}{r} + \frac{(r-4) \cdot 2^{r-3}}{(r-3)(r-2)} + \frac{1}{(r-3)(r-2)},
		\end{aligned} $$
	where $k(p)=\frac{p \cdot 2^{p+1}}{(p+1)(p+2)} + \frac{1}{(p+1)(p+2)}$ is an  increasing function on $p$ when $7\leq r\leq 11$ and $ 0\leq p\le r-4.$
Then it suffices to show that
		$$r \left( r - 3 \right) \left( r - 2 \right) \left( 2^{r +1} - 1 + 2^s \right)  > \left( r + 1 + s \right) \left( 2^r \left( r - 3 \right) \left( r - 2 \right) + r \left( r - 4 \right) \cdot 2^{r-3}  +r \right)$$
		when $7\le r\le 11$ and $s<r$.
Let
		$$	
			f(s) = r \left( r - 3 \right) \left( r - 2 \right) \left( 2^{r +1} - 1 + 2^s \right)  - \left( r + 1 + s \right) \left( 2^r \left( r - 3 \right) \left( r - 2 \right) + r \left( r - 4 \right) \cdot 2^{r-3}  +r \right)
		$$ be a function $f(s)$ on $s$.
		By a direct verification, we get $f(s)>0$ when $7\leq r\leq 11$ and $s\in\{r-1,r-2,r-3\}.$
              For $s\le r-4$, we have
       $$	\begin{aligned}
       	f(s) &\ge  r \left( r - 3 \right) \left( r - 2 \right)  2^{r +1}   - \left( r + 1 + s \right) \left( 2^r \left( r - 3 \right) \left( r - 2 \right) + r \left( r - 4 \right) \cdot 2^{r-3}  +r \right) \\
         	& \geq 2^{r+1} (r - 3) (r - 2) r - \left( 2^r (r - 3) (r - 2) + 2^{r-3} (r - 4) r + r \right) (2r-3)\\
          	&>0,
       \end{aligned}
       $$
       where $q(r)=2^{r+1} (r - 3) (r - 2) r - \left( 2^r (r - 3) (r - 2) + 2^{r-3} (r - 4) r + r \right) (2r-3)$ is an  increase function on $r(7\leq r\leq 11)$.

       Consequently, 2) follows.

       3)-4) The results follow by a direct verification and the details are omitted.
	\end{proof}	

\noindent{\bf Remark 2.} If $r=3$, $s=2$ and $p=1$, then $2^{r+1}+2^s-1= (r+1+s)(\frac{2^r-1}{r}+\frac{2^{p+2}-1}{p+2}-\frac{2^{p+1}-1}{p+1})$.

\noindent \begin{proof}[\bf Proof of Theorem~\ref{maina}.]
Let $G$ be the  graph
  maximizing the number of cliques among  graphs with order $n$ and maximum degree at most $r$.
 We divide our proof into the following three cases:\\
{\bf Case 1.} $r=1$. According to Definition \ref{Def1},
 $w_G(u)=\frac{3}{2}$ if $d(u)=1$,  and $w_G(u)=1$ if $d(u)=0$, which implies  that
 $G=aK_2\cup K_b$ uniquely maximizes the number of cliques among  graphs with maximum degree at most $1$.\vspace{2mm}

\noindent {\bf Case 2.} $r=2$. Let $d(u)=2$. According to Definition \ref{Def1},  $w_G(u)=\frac{7}{3}$ if $u$ is contained in some triangle and $w_G(u)=2$ otherwise.
Moreover, since both values dominate the clique weight of those vertices whose degree is less than $2$, $k(C_4)=k(C_3\cup K_1)$ and $k(C_5)=k(C_3\cup K_2)$,
 maximality requires maximizing triangle formation and thus
 $G$ is the unique extremal graph, where
 $G=\frac{n}{3}C_3$ if $n\equiv 0\pmod{3}$, and $G=(\lfloor\frac{n}{3}\rfloor-1)C_3\cup C_4$ or $\lfloor\frac{n}{3}\rfloor C_3\cup K_1$ if $n\equiv1\pmod{3}$, and
  $G=(\lfloor\frac{n}{3}\rfloor-1)C_3\cup C_{5}$ or $\lfloor\frac{n}{3}\rfloor C_3\cup K_2$  otherwise.\vspace{2mm}


   \noindent {\bf Case 3.} $r\ge 3$.
  In view of $k(aK_{r+1}\cup K_b)=a(2^{r+1}-1)+2^b$,
  then $k(G)\ge a(2^{r+1}-1)+2^b.$
 Set
   $$w_G(\Delta)=\max \{w_G(v) | v\in V(G)\}.$$
We will show that $w_G(\Delta)= \frac{2^{r+1}-1}{r+1}$ through three assertions.

 \noindent {\bf Assertion 1.}
  \begin{align}
w_G(\Delta)>
\begin{cases}
\frac{2^r-1}{r}+\frac{2^{\frac{2r}{3}+2}-1}{\frac{2r}{3}+2}-\frac{2^{\frac{2r}{3}+1}-1}{\frac{2r}{3}+1} &~ {\rm if}~ r\ge 12; \\
\\
\frac{2^r-1}{r}+\frac{2^{r-2}-1}{r-2}-\frac{2^{r-3}-1}{r-3} &~ {\rm if}~ 7\le r\le 11; \\
\\
\frac{2^r-1}{r}+\frac{2^{r-1}-1}{r-1}-\frac{2^{r-2}-1}{r-2} &~ {\rm if}~ 4\le r\le 6; \\
\\
\frac{2^r-1}{r}+\frac{2^{1+2}-1}{1+2}-\frac{2^{1+1}-1}{1+1}=\frac{19}{6} &~ {\rm if}~ r=3.
\end{cases}
\end{align}

  For $r\ge 12$,  assume to the contrary that $w_G(\Delta)\le \frac{2^r-1}{r}+\frac{2^{\frac{2r}{3}+2}-1}{\frac{2r}{3}+2}-\frac{2^{\frac{2r}{3}+1}-1}{\frac{2r}{3}+1}$.
  We set $\frac{2^r-1}{r}+\frac{2^{\frac{2r}{3}+2}-1}{\frac{2r}{3}+2}-\frac{2^{\frac{2r}{3}+1}-1}{\frac{2r}{3}+1}:=A$
  for simplicity. Applying Lemma \ref{key11}, we have
\begin{align*}
		k(G) & \le nA \\
& = (a-1)(r+1)A+(r+1+b)A\\
		& < (a-1)(r+1)\frac{2^{r+1}-1}{r+1}+(2^{r+1}-1)+2^b\\
		& <a(2^{r+1}-1)+2^b,
		\end{align*}
which yields a contradiction. The proofs for other cases are similar and are omitted for brevity.
We should point out  that, when $r=3,s=2$ and $p=1$ described in Lemma \ref{key11}, the condition that $w_G(\Delta) >\frac{2^r-1}{r}+
\frac{2^{1+2}-1}{1+2}-\frac{2^{1+1}-1}{1+1}=\frac{19}{6}$ is essential.
 This is because, in such a case, not every vertex possesses an identical clique weight,
even though the equation $(r+1+b)w_G(\Delta)=k(K_{r+1}\cup K_b)$ holds.

 \noindent {\bf Assertion 2.} Let $u$ be the vertex with $w_G(u)=w_G(\Delta)$ and let $p$ be the integer
 number such that $|E(G[N[u]])|=\binom{r+1}{2}-p$. Then
  \begin{align}
 p\le
 \begin{cases}
 \lfloor\frac{r}{3}\rfloor-1 &~ {\rm if}~ r\ge 12; \\
 2 &~ {\rm if}~ 7\le r\le 11; \\
 1 &~ {\rm if}~ 4\le r\le 6; \\
 0 &~ {\rm if}~ r=3.
 \end{cases} ~~~~~~~~~
 \end{align}

For $r\ge 12$, from Assertion 1, $w_{G}(\Delta)> \frac{2^r-1}{r}+\frac{2^{\frac{2r}{3}+2}-1}{\frac{2r}{3}+2}-\frac{2^{\frac{2r}{3}+1}-1}{\frac{2r}{3}+1}$.
Then, by Theorem \ref{lemma1}, we have
 $$w_{G}(\Delta)\ge w_{\mathcal{C}(r+1,\binom{r}{2}+\lceil
 	\frac{2r}{3}\rceil+1)}(\Delta),$$ here we use $\lceil
 	\frac{2r}{3}\rceil$ to ensure the integrity and thus making the inequality weaker.
 Applying Theorem \ref{lemma1} again, we have
%
  \begin{align*}
|E(G[N[u]])| &\ge |E(\mathcal{C}(r+1, \binom{r}{2} + \left\lceil \frac{2r}{3} \right\rceil + 1))| \\
&= \binom{r}{2} + \left\lceil \frac{2r}{3} \right\rceil + 1 \\
&= \binom{r+1}{2} - r + 1 + \left\lceil \frac{2r}{3} \right\rceil \\
&= \binom{r+1}{2} - \left\lfloor \frac{r}{3} \right\rfloor + 1.
\end{align*}
 The proofs for other cases are similar and are omitted for brevity.

%
%
%
%
%

\noindent {\bf Assertion 3.}  $w_G(\Delta)=\frac{2^{r+1}-1}{r+1}$.
 When $n=r+1$, by Lemma \ref{K_r+1} $K_{r+1}$ is the unique graph satisfying $n\cdot w_{K_{r+1}}(\Delta)\ge 2^{r+1}-1$.
When $r=3$, to ensure that $w_G(\Delta)$ satisfies Inequality (1), $G$ must be $aK_4\cup K_b$  by Assertion 2.
 In this case,  $w_G(\Delta)=\frac{2^{3+1}-1}{3+1}$, and thus the result holds when $n=r+1$ or $r=3$.
 Therefore, in the following discussion, we always assume that $n>r+1$ and $r \ge 4.$ \vspace{2mm}
	
Assume to the contrary that $w_G(u)=w_G(\Delta)<\frac{2^{r+1}-1}{r+1}$.	
 Then, combining with Assertion 2 and Lemma \ref{K_r+1}, $G[N[u]]\neq K_{r+1}$ and thus $|E(G[N[u]])|= \binom{r+1}{2}-p$, where $p$ satisfies the Inequality (1).
  For the sake of convenience, let $H:=G[N[u]]$,
 $N(u)=\{u_i|i=1,2,\ldots,r\}$,  $V(G)\backslash N[u]=\{w_i|i=1,2,\ldots,n-r-1\}$ and
 $E(H,G\backslash H)=\{e_i|i=1,2,\ldots,s\}$. Then one of the following two subcases must  occur:
  \vspace{2mm}

\noindent{\bf Subcase 3.1.}  For each $i(i=1,2,\ldots,n-r-1)$,  $d_H(w_i)\le p$.

Let $e_i=u_iw_i$ for $i=1,2$ such that $u_1u_2\notin E(H)$. Note that $w_1$ and $w_2$
may be the same vertex.
 Let $G^*$ be the graph obtained from $G$ by deleting the edges $e_1$ and $e_2$. Then $$k(G)-k(G^*)\le k(e_1;G)+k(e_2;G)\le 2^{p+1},$$ where the last inequality holds  from Lemma \ref{key3},
  since $G$ contains a  subgraph which is a subgraph of $\Gamma$ described in Lemma \ref{key3}.
   Let now $G^{**}$ be the graph obtained from $G^*$ by adding the edge $u_1u_2$.
Recall that $\overline{H}$ contains  $p$ edges, which implies that $G[N[u]]$ contains at least $r+1-2p$ $r$-degree vertices.
Therefore, $G^*$ contains a subgraph which  is isomorphic to $K_{r-2p+3}-K_2$ that includes the vertices $u_1$ and $u_2$.
 Consequently,  \begin{align*}
  k(G^{**})-k(G^*) &\ge k(K_{r-2p+3})-k(K_{r-2p+3}-K_2)\\
  &=2^{r-2p+1}\\
  &>2^{p+1},
  		\end{align*}
where the last inequality holds for $r\ge 3p+1$ by Inequality (2), which yields a contradiction to  the maximality of the graph $G$.
     Hence, this case can not occur.\vspace{2mm}

\noindent  {\bf Subcase 3.2.} There exists a vertex, say $w_1$, such that $d_H(w_1)>p$.

\noindent Suppose  that $d_H(w_1)=p+t$ with $t\ge 1$. Set $\{e_i=u_iw_1|i=1,2,\ldots,p+t\}$.
Then there exist at least $t$ pair of vertices, say $(u_{2i-1},u_{2i})$ for $i=1,2,\ldots,t$,
 such that  $u_{2i-1}u_{2i}\notin E(H)$.
 Thus $G[N(w_1)\cup w_1]$ is  isomorphic to a subgraph of $K_{p+t+1}-tK_2$.
 Let $G^*$ be the graph obtained from $G$ by deleting the edges $\{e_i|i=1,2,\ldots,2t\}$.
 Then $G$ contains a subgraph which is a subgraph of $\Gamma$ described in Lemma \ref{key4}, thus, combining this with Lemma \ref{key4}, we have
  \begin{align*}
  k(G)-k(G^*) &\le \sum_{i=1}^{2t}k(\cup_{i=1}^{2t}e_i;G)\\
  &\le  2^{p-t}+ k(G[N(w_1)\cup w_1])-k(G^*[N(w_1)\cup w_1])\\
  &<2^{p-t}+k(K_{p+t+1}-tK_2)-k(K_{p+t}-tK_2),
  		\end{align*}
where $k(\cup_{i=1}^{2t}e_i;G)$ denotes the number of cliques of $G$ including at least one edge of $\sum_{i=1}^{2t}e_i$.

  Let  $G^{**}$ be the graph obtained from $G^*$ by adding the edges $u_{2i-1}u_{2i}$ for $i\in [t]$. Similar to the discussion above,
   $G[N[u]]$ contains at least $r+1-2p$ $r$-degree vertices and thus
  $G[N[u]]$ contains a subgraph of $K_{r+1-2p+2t}-tK_2$ that contains all vertices $\{u_i|i=1,2,\ldots,2t\}$. Therefore, we have
  \begin{align*}
  k(G^{**})-k(G^*) &>   k(K_{r+1-2p+2t})-k(K_{r+1-2p+2t}-tK_2).
  \end{align*}
 Consequently, for $t\ge 2$, we have
  \begin{align*}
 k(G^{**})-k(G) &>  k(K_{r+1-2p+2t})-k(K_{r+1-2p+2t}-tK_2) -k(K_{p+t+1}-tK_2)+k(K_{p+t}-tK_2)-2^{p-t}\\
 &>  k(K_{p+2+2t})-k(K_{p+2+2t}-tK_2) -k(K_{p+t+1}-tK_2)-2^{p-t}~~(by~r\ge 3p+1)\\
 &=  k(K_{p+2+2t}-K_2)+k(K_{p+2t})-k(K_{p+2+2t}-tK_2) -k(K_{p+t+1}-tK_2)-2^{p-t}\\
  &\ge  k(K_{p+2t})-k(K_{p+t+1}-tK_2)-2^{p-t}\\
   &=  k(K_{p+2t}-K_2)+k(K_{p+2t-2})-k(K_{p+t+1}-tK_2)-2^{p-t}\\
    &\ge  k(K_{p+2t-2})-2^{p-t}\\
 & \ge 0.
 \end{align*}
 If $t=1$, then
   \begin{align*}
 k(G^{**})-k(G) &>  k(K_{r-2p+3})-k(K_{r-2p+3}-K_2) -k(K_{p+2}-K_2)+k(K_{p+1}-K_2)-2^{p-1}\\
 &= 2^{r-2p+1}-2^{p+1}\\
   & \ge 2^{p+2}-2^{p+1} ~~(by~r\ge 3p+1)\\
   &>0.
 \end{align*}
Hence, all these results contradict the maximality of the graph $G$.  Hence, this case can not occur.

Up to now, $w_G(\Delta)=\frac{2^{r+1}-1}{r+1}$. Consequently,
 $G$ contains a subgraph  $K_{r+1}$. If the order of the graph $G-K_{r+1}$ is larger  than $r+1$,
  then, by a similar method, $G$ has another $K_{r+1}$. Thus $G$ contains $\lfloor\frac{n}{r+1}\rfloor$ copies of  $K_{r+1}$ and the proof is complete.
\end{proof}


\begin{thebibliography}{99}
\bibitem{aanw} B.M. \'Abrego, S. Fern\'andez-Merchant, M.G. Neubauer and W.Watkins, Sum of squares of degrees in a graph, {\bf J. Inequal. Pure and Appl. Math.} 10 (3) (2009),  64.

\bibitem{AMM} J. Alexander, J. Cutler and T. Mink, Independent sets in graphs with given minimum degree, {\bf Electron. J. Combin.} 19 (2012),  37, 11.

\bibitem{Bondy} J.A. Bondy and U.S.R. Murty, Graph Theory, GTM, vol. 244, {\it Springer,} 2008.

\bibitem{ch} Z. Chase, The maximum number of triangles in a graph of given maximum degree, {\bf Adv. in Combin.} 2020: 10, \#R5.

\bibitem{ckk} E.K. Cho, J. Kim and S.I. Kim, Independent domination of graphs with bounded maximum degree, {\bf J. Combin. Theory Ser. B} 158 (2023), 341--352.

\bibitem{cnr} J. Cutler, J. Nir and A.J. Radcliffe, Supersaturation for subgraph counts, {\bf Graphs and Combin.} 38 (2022), 6.

\bibitem{cr} J. Cutler and A.J. Radcliffe, The maximum number of complete subgraphs in a graph with given maximum degree, {\bf J. Combin. Theory Ser. B} 104 (2014), 60--71.

\bibitem{er} P. Erd\"{o}s, On the number of complete subgraphs contained in certain graphs, {\bf Magyar Tud. Akad. Mat. Kutató Int. Közl.} 7 (1962), 459--464.

\bibitem{Galvin} D. Galvin, Two problems on independent sets in graphs, {\bf Discrete Math.} 311 (2011), 2105–2112.

\bibitem{ha} N.G. Hadžiivanov, A generalization of Tur\'{a}n’s theorem on graphs, {\bf C. R. Acad. Bulgare Sci.} 29 (1976), 1567--1570.

\bibitem{28} S. Norine, P. Seymour, R. Thomas and P. Wollan, Proper minor-closed families
are small, {\bf J. Combin. Theory Ser. B} 96(5), 754--757 (2006).

\bibitem{32} B. Reed and D.R. Wood, Fast separation in a graph with an excluded minor.
In: Proc. European Conf. on Combinatorics, Graph Theory and Applications
(EuroComb'05), vol. AE of Discrete Math. Theor. Comput. Sci. Proceedings, 2005, 45--50.

\bibitem{ro} S. Roman, The maximum number of \( q \)-cliques in a graph with no \( p \)-clique, {\bf Discrete Math.} 14 (1976), 365--371.

\bibitem{sa} N. Sauer, A generalization of a theorem of Tur\'{a}n, {\bf J. Combin. Theory Ser. B} 10 (1971), 109--112.

\bibitem{Kahn} J. Kahn, An entropy approach to the hard-core model on bipartite graphs, {\bf Combin. Probab. Comput.} 10 (2001), 219–237.

\bibitem{kat} G. Katona, A theorem of finite sets, in: {\it Theory of Graphs}, Proc. Colloq., Tihany, 1966, {\it Academic Press}, New York, 1968,  187–207.

\bibitem{kr} L. Keough and A.J. Radcliffe, Graphs with the fewest matchings, {\bf Combinatorica} 36 (6) (2016), 703--723.

\bibitem{kra} R. Kirsch and A.J. Radcliffe, Many triangles with few edges, {\bf Electron. J. Combin.} 26 (2) (2019), \#R36.

\bibitem{kra2} R. Kirsch and A.J. Radcliffe, Many cliques with few edges, {\bf Electron. J. Combin.} 28 (1) (2021), \#R26.

\bibitem{kru} J.B. Kruskal, The number of simplices in a complex, in: {\it Mathematical Optimization Techniques}, {\it Univ. of California Press}, Berkeley, Calif, 1963,  251--278.

\bibitem{Tur} P. Tur\'{a}n, On an extremal problem in graph theory, Mat. Fiz. Lapok 48, 436--452 (1941).

\bibitem{Zhao} Y. Zhao, The number of independent sets in a regular graph, {\bf Combin. Probab. Comput.} 19 (2010), 315--320.

\bibitem{z} A.A. Zykov, On some properties of linear complexes, {\bf Mat. Sb. (N.S.)} 24 (66) (1949), 163--188.






\end{thebibliography}
\end{document}